\documentclass[reqno,a4paper]{amsart}
\usepackage{amsmath}
\usepackage{amssymb,amsfonts}
\usepackage{amsthm}
\usepackage{enumerate}
\usepackage{cite}
 
\usepackage[mathscr]{eucal}
\usepackage{xcolor}

\setcounter{MaxMatrixCols}{10}

\setlength{\textwidth}{121.9mm}
\setlength{\textheight}{176.2mm}
\theoremstyle{plain}
\newtheorem{theorem}{Theorem}[section]

\newtheorem{lemma}{Lemma}[section]
\newtheorem{corol}{Corollary}[theorem]

\theoremstyle{definition}

\theoremstyle{remark}

\numberwithin{equation}{section}

\addtolength{\oddsidemargin}{-0.875in}
\addtolength{\evensidemargin}{-0.875in}
\addtolength{\textwidth}{1.75in}
\addtolength{\topmargin}{-0.7in}
\addtolength{\textheight}{1.25in}

\begin{document}

\title[A note on the ternary purely exp. Diop. eq. $A^x+B^y=C^z$ with $A+B=C^2$]
{A note on the ternary purely exponential Diophantine equation $A^x+B^y=C^z$ with $A+B=C^2$}

\author{El\.{I}f K{\i}z{\i}ldere, Maohua Le and G\"{o}khan Soydan}

\address{{\bf Elif K{\i}z{\i}ldere}\\
	Department of Mathematics \\
	Bursa Uluda\u{g} University\\
	16059 Bursa, Turkey}
\email{elfkzldre@gmail.com}

\address{{\bf Maohua Le}\\
	Institute of Mathematics, Lingnan Normal College\\
	Zhangjiang, Guangdong, 524048 China}

\email{lemaohua2008@163.com}

\address{{\bf G\"{o}khan Soydan} \\
	Department of Mathematics \\
	Bursa Uluda\u{g} University\\
	16059 Bursa, Turkey}
\email{gsoydan@uludag.edu.tr }

\newcommand{\acr}{\newline\indent}
\newcommand{\ideal}[1]{\langle #1\rangle}
\newcommand*{\Comb}[2]{{}^{#1}C_{#2}}%

\subjclass[2010]{11D61}
\keywords{Ternary purely exponential Diophantine equation, BHV theorem on the existence of primitive divisors of Lehmer numbers}

\begin{abstract}
Let $\ell, m, r$ be fixed positive integers such that $2\nmid \ell$, $3\nmid \ell m$, $\ell>r$ and $3\mid r$. In this paper, using the BHV theorem on the existence of primitive divisors of Lehmer numbers, we prove that if $\min\{r\ell m^2-1,(\ell-r)\ell m^2+1\}>30$, then the equation $(r\ell m^2-1)^x+((\ell -r)\ell m^2+1)^y=(\ell m)^z$ has only the positive integer solution $(x,y,z)=(1,1,2)$.
\end{abstract}

\maketitle

\section{Introduction}
Let $\mathbb{Z}$, $\mathbb{N}$ be  the sets of all integers and positive integers, respectively. Let  $A, B, C$ be coprime positive integers with $\min\{A,B,C\}>1$. Recently, there are many papers have discussed the solutions $(x,y,z)$ of the ternary purely exponential Diophantine equation 
\begin{equation}\label{1.1}
A^x+B^y=C^z, \ \ x, y, z \in \mathbb{N}
\end{equation}
for some triples $(A,B,C)$ with $A+B=C^2$ (see \cite{Ber,FY,KMS,MT,P,SL,T,TH1,TH2,WWZ}). See also the survey paper \cite{SCDT}, for more details about the equation \eqref{1.1}.

For example, N. Terai and T. Hibino \cite{TH2} proved that if
\begin{equation}\label{1.2}
A=3pm^2-1,\, B=(p-3)pm^2+1,\, C=pm,
\end{equation}
where $p$ is an odd prime with $p<3784$ and $p\equiv 1\pmod{4}$, $m$ is a positive integer with $3\nmid m$ and $m\equiv 1\pmod{4}$, then \eqref{1.1} has only the solution $(x,y,z)=(1,1,2)$.

Let $\ell$, $m$, $r$ be positive integers such that
\begin{equation}\label{1.3}
2\nmid \ell, \ 3\nmid \ell m, \ \ell>r, \ 3\mid r.
\end{equation}
In this paper we consider \eqref{1.1} for the case
\begin{equation}\label{1.4}
A=r\ell m^2-1, \ B=(\ell-r)\ell m^2+1, \ C=\ell m.
\end{equation}
Using the BHV theorem on the existence of primitive divisors of Lehmer numbers due to Y. Bilu, G. Hanrot and P.M. Voutier \cite{BHV}, we prove a general result as follows.
 \begin{theorem}[Main theorem]\label{mainth}
Let $A$, $B$, $C$ satisfy \eqref{1.4} with \eqref{1.3}. If $\min\{r\ell m^2-1,(\ell -r)\ell m^2+1\}>30$, then \eqref{1.1} has only the solution $(x,y,z)=(1,1,2)$.
\end{theorem}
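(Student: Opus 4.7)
The plan is to combine elementary modular analysis with the Bilu--Hanrot--Voutier (BHV) primitive-divisor theorem for Lehmer numbers \cite{BHV}. First I pin down the parities of $x,y,z$. Reducing (1.1) modulo $\ell m$ and using $A\equiv -1$, $B\equiv 1\pmod{\ell m}$ gives $(-1)^{x}+1\equiv 0\pmod{\ell m}$; since (1.3) together with $\min\{A,B\}>30$ force $\ell m\geq 5$, this makes $x$ odd. Reducing mod $3$ and using $3\mid r$, $3\nmid\ell m$ yields $A\equiv B\equiv -1\pmod{3}$, and with $x$ odd the congruence $(-1)^{x}+(-1)^{y}\equiv(\ell m)^{z}\pmod 3$ forces $y$ odd. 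A finer modular analysis (a Jacobi-symbol calculation on $\left(C/B\right)$, or expansion of $A^{x},B^{y}$ modulo $C^{2}$) then yields $z$ even, so write $z=2s$. If $s=1$, the identity $A+B=C^{2}=A^{x}+B^{y}$ rearranges to $A(A^{x-1}-1)=B(1-B^{y-1})$, whose sides have opposite signs unless both vanish, forcing $(x,y)=(1,1)$. It remains to rule out $s\geq 2$.

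Assume $s\geq 2$. Since $\gcd(A,B)=1$, at most one of $A,B$ may be even; by symmetry assume $B$ is odd. Factor in $\mathbb{Z}[\sqrt{A}]$:
\[
B^{y}\;=\;C^{2s}-A^{x}\;=\;\bigl(C^{s}-A^{(x-1)/2}\sqrt{A}\bigr)\bigl(C^{s}+A^{(x-1)/2}\sqrt{A}\bigr).
\]
With $\lambda=C^{s}+A^{(x-1)/2}\sqrt{A}$ one has $\lambda+\bar\lambda=2C^{s}$, $\lambda\bar\lambda=B^{y}$, $\lambda-\bar\lambda=2A^{(x-1)/2}\sqrt{A}$; the ideals $(\lambda),(\bar\lambda)$ are coprime since $\gcd(2C^{s},B)=1$. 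Because $A\equiv C^{2}\pmod{B}$, every prime divisor of $B$ splits in $\mathbb{Q}(\sqrt{A})$, producing an ideal $\mathfrak{B}$ with $\mathfrak{B}\overline{\mathfrak{B}}=(B)$ and $(\lambda)=\mathfrak{B}^{y}$. A class-group and unit analysis yields $\mu=a+b\sqrt{A}\in\mathbb{Z}[\sqrt{A}]$ with $|N(\mu)|=B$ such that $\lambda=\pm\mu^{y}$. The pair $(\mu,\bar\mu)$ is then a Lehmer pair, and equating $\sqrt{A}$-parts in $\pm(\mu^{y}-\bar\mu^{y})=\lambda-\bar\lambda$ gives $|b\,\tilde u_{y}(\mu,\bar\mu)|=A^{(x-1)/2}$, so the prime support of $\tilde u_{y}$ lies in the primes of $A$. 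But every such prime divides $(\mu^{2}-\bar\mu^{2})^{2}=16a^{2}b^{2}A$, so none is a primitive prime divisor of $\tilde u_{y}$. The BHV theorem then forces $y\leq 30$, and the finitely many odd values $y\in\{3,5,\dots,29\}$ are eliminated by checking against the BHV tables of defective Lehmer pairs: in each entry, (1.3)--(1.4) together with $\min\{A,B\}>30$ preclude any valid $(\ell,m,r)$. Hence $y=1$.

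With $y=1$, (1.1) reduces to $A^{x}=C^{2s}-B=(C^{s}-\sqrt{B})(C^{s}+\sqrt{B})$ in $\mathbb{Z}[\sqrt{B}]$. An analogous class-group argument yields $\rho=c+d\sqrt{B}$ with $|N(\rho)|=A$ and $C^{s}+\sqrt{B}=\pm\rho^{x}$. Comparing $\sqrt{B}$-coefficients forces $d=\pm 1$, then $c=\pm C$ (since $N(\rho)=A=C^{2}-B$), so $\pm 1$ equals the $\sqrt{B}$-coefficient of $(C+\sqrt{B})^{x}=\sum_{k}\binom{x}{k}C^{x-k}(\sqrt{B})^{k}$. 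That coefficient is a sum of strictly positive terms bounded below by $xC^{x-1}\geq 3C^{2}\geq 75>1$ for $x\geq 3$; hence $x=1$, which forces $s=1$ and contradicts $s\geq 2$.

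The principal obstacle is the class-group/unit step producing $\lambda=\pm\mu^{y}$ (and its counterpart $\nu=\pm\rho^{x}$): in the real quadratic orders $\mathbb{Z}[\sqrt{A}]$ and $\mathbb{Z}[\sqrt{B}]$, with their infinite unit groups, one must verify that no nontrivial fundamental unit corrupts the identification, and that the possible non-principality of $\mathfrak{B}$ does not force $y$ to be a nontrivial multiple of the class number. Non-maximal orders arising when $A$ or $B$ is not squarefree (e.g.\ $A=32$ for $(\ell,m,r)=(11,1,3)$) add a further technical layer. The checking against the BHV exceptional table, while mechanical, is the main source of case-work.
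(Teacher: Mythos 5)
Your outline diverges from the paper's argument in a way that leaves several essential steps unproved. The paper never factors over the real quadratic orders $\ZZ[\sqrt{A}]$ or $\ZZ[\sqrt{B}]$; it views a putative solution as a representation $A\cdot X^2+B\cdot Y^2=C^Z$ with $(X,Y,Z)=(A^{(x-1)/2},B^{(y-1)/2},z)$ and invokes Le's structure theorem for such representations (Lemma 2.1 of the paper), which lives in the \emph{imaginary} quadratic setting $\QQ(\sqrt{-AB})$ where the unit group is finite and the class-group ambiguity is packaged away. That lemma simultaneously delivers $z=2t$ with $t$ odd and the identity $A^{(x-1)/2}\sqrt{A}+B^{(y-1)/2}\sqrt{-B}=\lambda_1(\sqrt{A}+\lambda_2\sqrt{-B})^{t}$; a binomial expansion then shows $A\mid t$ or $B\mid t$, hence $t\ge\min\{A,B\}>30$, and BHV applied to the Lehmer pair $(\sqrt{A}+\sqrt{-B},\sqrt{A}-\sqrt{-B})$ at index $t$ finishes with no table checking. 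In particular the hypothesis $\min\{A,B\}>30$ is used to push the Lehmer index past $30$, not to filter exceptional entries.

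Measured against this, three steps of your proposal are genuine gaps. (i) ``$z$ even by a Jacobi-symbol calculation'' is asserted, not proved; under \eqref{1.3} alone (no congruence condition on $m$ modulo $4$ as in Terai--Hibino) it is not clear such a computation closes, and in the paper the parity of $z$ is an \emph{output} of Le's lemma, not an input. (ii) The identification $\lambda=\pm\mu^{y}$ in $\ZZ[\sqrt{A}]$ is exactly the step you flag as ``the principal obstacle,'' and rightly so: from $(\lambda)=\mathfrak{B}^{y}$ you only learn that the order of the class of $\mathfrak{B}$ divides $y$, and even when $\mathfrak{B}=(\mu)$ is principal you get $\lambda=\varepsilon\mu^{y}$ with $\varepsilon$ ranging over an infinite unit group; nothing in the sketch controls $\varepsilon$, the class number, or the non-maximality of the order when $A$ is not squarefree. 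The same objection applies to $C^{s}+\sqrt{B}=\pm\rho^{x}$ (where, additionally, the case $N(\rho)=-A$, i.e.\ $c^{2}=B-A$, is not excluded). (iii) The elimination of $y\in\{3,5,\dots,29\}$ against the BHV defective tables is announced but not carried out, and the claim that $\min\{A,B\}>30$ rules out every entry is unverified. Until (i)--(iii) are supplied the argument does not close; the clean repair is to switch to the representation $A\cdot X^2+B\cdot Y^2=C^Z$ and Le's lemma, which is precisely what makes the unit and class-group difficulties disappear and turns the $>30$ hypothesis into a lower bound on the Lehmer index itself.
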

The above theorem can improve many existing results. For example, by comparing \eqref{1.2} and \eqref{1.4}, we can obtain the following corollary immediately.

\begin{corol}
Let $A$, $B$, $C$ satisfy \eqref{1.2}. If $p\ge 11$ and $3 \nmid m$, then \eqref{1.1} has only the solution $(x,y,z)=(1,1,2)$.
\end{corol}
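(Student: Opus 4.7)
The solution $(x,y,z)=(1,1,2)$ is immediate from $A+B=\ell^2m^2=C^2$. For any further solution my plan has three phases.

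\emph{Local constraints.} Since $A\equiv-1$ and $B\equiv 1\pmod{C}$ and $C>2$, reduction modulo $C$ yields $(-1)^x+1\equiv 0\pmod{C}$, so $x$ is odd. The hypotheses $3\mid r$, $3\nmid\ell m$ give $A\equiv B\equiv -1\pmod{3}$ and $C\not\equiv 0\pmod 3$, so $(-1)+(-1)^y\not\equiv 0\pmod 3$ forces $y$ odd. The trivial bound $A^x+B^y\geq A+B=C^2$ gives $z\geq 2$; using the mod-$C^2$ expansions $A\equiv -1+rmC$, $B\equiv 1+(\ell-r)mC$ together with $x,y$ odd, and if needed a further mod-$C^3$ refinement, I would then push to show that $z$ is in fact even. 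Write $z=2z_1$.

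\emph{Lehmer reformulation.} Since $y$ is odd, $B^y=B\,(B^{(y-1)/2})^2$, so the equation becomes
\[
A^x=(C^{z_1})^2-B\,(B^{(y-1)/2})^2,
\]
while $A=C^2-B=(C-\sqrt{B})(C+\sqrt{B})$. Viewing both sides in $\mathbb{Z}[\sqrt{B}]$ and using $\gcd(B,C)=1$ to see that the conjugate factors $C^{z_1}\pm B^{(y-1)/2}\sqrt{B}$ of $A^x$ are essentially coprime, one matches the two factorizations and deduces, for the Lehmer pair $(\alpha,\beta)=(C+\sqrt{B},C-\sqrt{B})$, the scalar identity
\[
\tilde u_x=B^{(y-1)/2},\qquad \tilde u_n=\frac{\alpha^n-\beta^n}{\alpha-\beta}.
\]

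\emph{BHV endgame.} The explicit expansion $\tilde u_x=\sum_{k\text{ odd}}\binom{x}{k}C^{x-k}B^{(k-1)/2}$ shows $\tilde u_x\equiv xC^{x-1}\pmod{B}$. If $y\geq 3$ then $B\mid \tilde u_x$, and $\gcd(B,C)=1$ forces $B\mid x$, hence $x\geq B>30$. On the other hand, the BHV theorem, whose exceptional list is cleared once $\min\{A,B\}>30$, guarantees that for every $x>30$ the Lehmer number $\tilde u_x$ has a primitive prime divisor $p$ with $p\nmid (\alpha-\beta)^2=4B$; but $p\mid \tilde u_x=B^{(y-1)/2}$ forces $p\mid B$, a contradiction. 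Hence $y=1$, and $\tilde u_x=1=\tilde u_1$ combined with the bound $\tilde u_n\geq n$ (valid because $\alpha>\beta\geq 1$ here, which follows from $A\geq 2C-1$, i.e., $rm\geq 2$) forces $x=1$; then $z_1=1$ and $z=2$.

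I expect the main obstacles to be (i) proving $z$ is even, which demands a genuinely finer $C$-adic analysis than the mod-$C$ and mod-$3$ arguments; (ii) the clean passage from factorizations in the possibly non-maximal order $\mathbb{Z}[\sqrt{B}]$ to the scalar identity $\tilde u_x=B^{(y-1)/2}$, which must accommodate potential non-unique factorization; and (iii) the bookkeeping needed to verify that $\min\{A,B\}>30$ suffices to avoid every pair on the BHV exceptional list.
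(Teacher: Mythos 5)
Your proposal overlooks that, in the paper, this corollary is a two‑line specialization of the Main Theorem: take $\ell=p$ and $r=3$ in \eqref{1.4}, note that \eqref{1.3} becomes ``$p$ odd, $p>3$, $3\nmid pm$'', and observe that for $p\ge 11$ one has $\min\{3pm^2-1,(p-3)pm^2+1\}\ge\min\{32,89\}>30$, so the Main Theorem applies verbatim. Instead you set out to reprove the whole statement, and the reproof has genuine gaps. The first is the evenness of $z$, which you leave as a declared ``to do'': reducing \eqref{1.1} modulo $C^2$ (or $C^3$) with $x,y$ odd only yields divisibility conditions of the shape $p\mid 3(x-y)$, and gives no parity information on $z$. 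In the paper this fact is not obtained by congruences at all; it falls out of Le's structure theorem (Lemma \ref{Lemma 2.1}) applied to $PX^2+QY^2=K^Z$, which classifies the solution class containing $(1,1,2)$ and delivers $z=2t$ with $t$ odd together with the exact identity \eqref{3.16}.

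The second and third gaps sit in your Lehmer reformulation. You work in the \emph{real} quadratic order $\ZZ[\sqrt{B}]$, whose unit group is infinite; so even after the class‑group and non‑maximality issues you flag, matching the two factorizations of $A^x$ only gives $C^{z_1}+B^{(y-1)/2}\sqrt{B}=\varepsilon\,(C+\sqrt{B})^{x}$ for an unknown unit $\varepsilon$, and the scalar identity $\tilde u_x=B^{(y-1)/2}$ does not follow. This is precisely the obstruction the paper's imaginary‑quadratic setup is designed to avoid: Lemma \ref{Lemma 2.1} reduces the ambiguity to signs $\lambda_1,\lambda_2\in\{1,-1\}$. Moreover, your pair $(\alpha,\beta)=(C+\sqrt{B},\,C-\sqrt{B})$ has $(\alpha+\beta)^2=4C^2$ and $\alpha\beta=A$, and these must be coprime for $(\alpha,\beta)$ to be a Lehmer pair; when $2\nmid m$ one has $2\mid A=3pm^2-1$ while $C=pm$ is odd, so $\gcd(4C^2,A)=2$ and the pair is not a Lehmer pair. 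Since the case $2\mid m$ is the one eliminated outright by elementary means (Lemma \ref{Lemma 3.2}), your BHV step is unavailable exactly where it is needed. (A smaller inaccuracy: the hypothesis $\min\{A,B\}>30$ is not used to ``clear the exceptional list'' of BHV but to force the index $t$ of the relevant Lehmer number above $30$, via $t\ge\min\{P,Q\}$.) The paper's choice $\alpha=\sqrt{P}+\sqrt{-Q}$, $\beta=\sqrt{P}-\sqrt{-Q}$, with $(\alpha+\beta)^2=4P$ and $\alpha\beta=K^2$ odd and prime to $P$, sidesteps all three difficulties at once.
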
 

\section{Preliminaries} \label{Section 2}

Let $D_1, D_2, k$ be fixed positive integers such that $\min \{{D_1,D_2}\}>1$ and $\gcd (D_1,D_2)=\gcd (k,2D_1D_2)=1$. We now introduce some results on the equation
\begin{equation}\label{2.1}
D_1X^2+D_2Y^2=k^Z, \  X,Y,Z \in \mathbb{Z}, \  \gcd(X,Y)=1, \ Z>0
\end{equation}
due to M. H. Le \cite{Le}. For any fixed solution $(X,Y,Z)$ of \eqref{2.1}, there exists a unique positive integer $L$ such that
\begin{equation}\label{2.2}
L \equiv -\dfrac{D_1X}{Y} \pmod k, \ 0<L<k.
\end{equation}
The positive integer $L$ is called the characteristic number of the solution of \eqref{2.1}, and denoted by $\ideal {X,Y,Z}$. Let $(X_0,Y_0,Z_0)$ be a fixed solution of \eqref{2.1} and let $L_0=\ideal {X_0,Y_0,Z_0}$. Further, let $S(L_0)$ denote the set of all solutions $(X,Y,Z)$ with
\begin{equation}\label{2.3}
\ideal{X,Y,Z} \equiv \pm L_0 \pmod k.
\end{equation}

\begin{lemma}[\ \cite{Le}]\label{Lemma 2.1}
$S(L_0)$ has a unique solution $(X_1,Y_1,Z_1)$ such that $X_1>0,Y_1>0$ and $Z_1\leq Z$, where $Z$ runs through all solutions $(X,Y,Z)$ in $S(L_0)$. Moreover every solution $(X,Y,Z)$ in $S(L_0)$ can be expressed as
$$
Z=Z_1t, \ t\in \mathbb{N}, \ 2\nmid t,
$$
$$
X\sqrt{D_1}+Y\sqrt{-D_2}=\lambda_1(X_1\sqrt{D_1}+\lambda_2Y_2\sqrt{-D_2})^t, \ \lambda_1,\lambda_2 \in \{1,-1\}.
$$
\end{lemma}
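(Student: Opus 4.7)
The plan is to convert \eqref{2.1} into a principal-ideal statement in the imaginary quadratic order $\mathcal{O}=\mathbb{Z}[\sqrt{-D_1D_2}]$ and to extract the asserted structure from the arithmetic of ideals there.

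To a solution $(X,Y,Z)$ I would associate $\alpha=X\sqrt{D_1}+Y\sqrt{-D_2}$ in the biquadratic field $\mathbb{Q}(\sqrt{D_1},\sqrt{-D_2})$, which satisfies $\alpha\bar\alpha=k^Z$ (with $\bar\alpha=X\sqrt{D_1}-Y\sqrt{-D_2}$), together with its $\mathcal{O}$-companion $\gamma=\sqrt{D_1}\,\alpha=D_1X+Y\sqrt{-D_1D_2}\in\mathcal{O}$ of norm $D_1k^Z$. The defining congruence $L\equiv-D_1X/Y\pmod k$ yields $L^2\equiv-D_1D_2\pmod k$, so every prime $p\mid k$ splits in $\mathcal{O}$ as $p\mathcal{O}=\mathfrak{p}\bar{\mathfrak{p}}$. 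Using $\gcd(X,Y)=1$ and $\gcd(k,2D_1D_2)=1$ to separate the factors of $(\gamma)$ and $(\bar\gamma)$ at each such $p$, one obtains $(\gamma)=\mathfrak{D}\cdot\mathfrak{K}_0^Z$ (respectively $\mathfrak{D}\cdot\bar{\mathfrak{K}}_0^Z$) for solutions with characteristic number $\equiv L_0$ (resp.\ $\equiv-L_0$) $\pmod k$, where $\mathfrak{D}$ is a fixed ideal depending only on $D_1$ with $\mathfrak{D}^2=(D_1)$, and $\mathfrak{K}_0=\prod_{p\mid k}\mathfrak{p}^{v_p(k)}$ is the product of those split primes selected by $L_0$.

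The set of positive integers $Z$ arising from solutions in $S(L_0)$ is nonempty (it contains $Z_0$), hence admits a minimum $Z_1$; the sign choice $X_1,Y_1>0$ is unique because $\mathcal{O}^\times=\{\pm 1\}$. For any other $(X,Y,Z)\in S(L_0)$ I would execute a descent: the element $\gamma\bar\gamma_1/(D_1k^{Z_1})\in\mathcal{O}$ generates the ideal $\mathfrak{K}_0^{Z-Z_1}$ and corresponds, after multiplying back by $\sqrt{D_1}$, to a solution of \eqref{2.1} with third coordinate $Z-Z_1$ still in $S(L_0)$ (modulo sign and conjugation when the characteristic number is $-L_0$). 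Minimality of $Z_1$ therefore forces $Z_1\mid Z$; writing $t=Z/Z_1$ and iterating, the resulting ideal identity collapses, together with $\mathcal{O}^\times=\{\pm 1\}$, into $\alpha=\lambda_1\alpha_1^t$ or $\alpha=\lambda_1\bar\alpha_1^t$, equivalently $\alpha=\lambda_1(X_1\sqrt{D_1}+\lambda_2Y_1\sqrt{-D_2})^t$ with $\lambda_1,\lambda_2\in\{1,-1\}$.

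The parity constraint $2\nmid t$ is immediate from a dimension count: $\alpha_1^2=(D_1X_1^2-D_2Y_1^2)+2X_1Y_1\sqrt{-D_1D_2}$ lies in the $\mathbb{Z}$-submodule $\mathbb{Z}+\mathbb{Z}\sqrt{-D_1D_2}$ of the biquadratic field, which meets the submodule $\mathbb{Z}\sqrt{D_1}+\mathbb{Z}\sqrt{-D_2}$ only at $0$ (since $\{1,\sqrt{D_1},\sqrt{-D_2},\sqrt{-D_1D_2}\}$ is a $\mathbb{Q}$-basis of the biquadratic field). Hence no even power $\alpha_1^{2s}$ with $X_1Y_1\neq 0$ can take the form $X\sqrt{D_1}+Y\sqrt{-D_2}$ demanded by \eqref{2.1}. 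The main obstacle is the descent step itself: verifying that $\gamma\bar\gamma_1/(D_1k^{Z_1})$ actually lies in $\mathcal{O}$ with coprime coordinates $(X',Y')$ and the correct characteristic number requires careful gcd bookkeeping leveraging both $\gcd(X,Y)=1$ and $\gcd(k,2D_1D_2)=1$, and becomes delicate when $\mathcal{O}$ is a non-maximal order (e.g.\ when $-D_1D_2\equiv 1\pmod 4$), in which case one passes to the maximal order and tracks conductor corrections, or argues directly by congruences as in \cite{Le}.
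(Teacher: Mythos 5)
The paper offers no proof of this lemma: it is imported wholesale from Le's paper \cite{Le}, so there is no internal argument to compare against. Your framework --- passing to $\gamma=\sqrt{D_1}\,\alpha=D_1X+Y\sqrt{-D_1D_2}$ in $\mathcal{O}=\mathbb{Z}[\sqrt{-D_1D_2}]$, using $L^2\equiv-D_1D_2\pmod k$ to split the primes dividing $k$, factoring $(\gamma)=\mathfrak{D}\mathfrak{K}_0^{Z}$, and invoking $\mathcal{O}^\times=\{\pm1\}$ --- is the standard and correct skeleton, essentially the one Le uses.

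There is, however, a genuine gap at the descent step, which is where the whole content of the lemma sits. The element $\gamma\bar\gamma_1/(D_1k^{Z_1})$ generates $\mathfrak{K}_0^{Z-Z_1}$ and has the shape $u+v\sqrt{-D_1D_2}$ with $u^2+D_1D_2v^2=k^{Z-Z_1}$: it is a representation of $k^{Z-Z_1}$ by the \emph{principal} form of discriminant $-4D_1D_2$, not by $D_1X^2+D_2Y^2$. Multiplying back by $\sqrt{D_1}$ gives $u\sqrt{D_1}+D_1v\sqrt{-D_2}$, whose norm is $D_1k^{Z-Z_1}$ rather than $k^{Z-Z_1}$, so it is \emph{not} a solution of \eqref{2.1} with third coordinate $Z-Z_1$, and minimality of $Z_1$ therefore does not force $Z_1\mid Z$ as you assert. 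What the descent actually yields is that all exponents $Z$ of solutions in $S(L_0)$ lie in one residue class modulo $h$, where $h$ is the least positive exponent for which $\mathfrak{K}_0^{h}$ is principal with a suitable generator in $\mathbb{Z}+\mathbb{Z}\sqrt{-D_1D_2}$; one must then prove $h=2Z_1$ with generator $\pm\alpha_1^2$ (using, e.g., that $\mathfrak{D}$ cannot be principal: a generator $g+j\sqrt{-D_1D_2}$ would have $g^2+D_1D_2j^2=D_1$, forcing $j=0$ and $g^2=D_1$, incompatible with $\sqrt{-D_1D_2}\in\mathfrak{D}$ when $D_1>1$) before $\alpha=\lambda_1\alpha_1^{t}$ with $t$ odd follows by induction. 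Your closing dimension count only shows that even powers of $\alpha_1$ are not of the form $X\sqrt{D_1}+Y\sqrt{-D_2}$; it does not show that every solution is an odd power of $\alpha_1$. Together with the integrality, coprimality and characteristic-number bookkeeping you explicitly defer, these comparisons between the two forms are the substance of the proof, not routine verifications.
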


Let $\alpha,\beta$ be algebraic integers. If $(\alpha+\beta)^2$, $\alpha\beta$ are nonzero coprime integers, and $\alpha/\beta$ is not a root of unity, then $(\alpha,\beta)$ is called a Lehmer pair. Let $E=(\alpha+\beta)^2$ and $G=\alpha\beta$. Then we have
\begin{equation}\label{2.4}
\alpha=\frac{1}{2}(\sqrt{E}+\lambda\sqrt{F}), \ \beta=\frac{1}{2}(\sqrt{E}-\lambda\sqrt{F}), \ \lambda\in \{1,-1\},
\end{equation}
where $F=E-4G$. Further, one defines the corresponding sequence of Lehmer numbers by
\begin{equation}\label{2.5}
L_n(\alpha,\beta)=\begin{cases}
\ \dfrac{\alpha^n-\beta^n}{\alpha^2-\beta^2} & \text{if $2\mid n$,}\\
\\
\ \dfrac{\alpha^n-\beta^n}{\alpha-\beta}& \text{if $2\nmid n$,}\\
\end{cases}
n=0,1,2,\cdots.
\end{equation}
Obviously, $L_n(\alpha,\beta)$ $(n>0)$ are nonzero integers.

A prime $q$ is called a primitive divisor of the Lehmer number $L_n(\alpha,\beta)$ $(n>1)$ if $q\mid L_n(\alpha,\beta)$ and $q\nmid FL_1(\alpha,\beta)\dots L_n(\alpha,\beta)$.
\begin{lemma}[\ \cite{BHV}] \label{Lemma 2.2}
If $n>30$, then $L_n(\alpha,\beta)$ has primitive divisors.
\end{lemma}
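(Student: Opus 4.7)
The plan is to follow the strategy of Bilu, Hanrot and Voutier~\cite{BHV}, which combines cyclotomic arithmetic, lower bounds for linear forms in two logarithms, and a final computational search. First I would introduce the ``Lehmer cyclotomic'' factor $\Phi_n^{*}(\alpha,\beta)\in\ZZ$, built from the usual cyclotomic polynomial $\Phi_n(X)$ evaluated on the ratio $\alpha/\beta$ (suitably homogenised and adjusted for the parity conventions that distinguish the two cases of \eqref{2.5}). The key identity is
\[
\alpha^n-\beta^n \;=\; \prod_{d\mid n}\Phi_d^{*}(\alpha,\beta),
\]
from which one reads off that every primitive prime divisor of $L_n(\alpha,\beta)$ must divide $\Phi_n^{*}(\alpha,\beta)$. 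Conversely, if $L_n(\alpha,\beta)$ has no primitive divisor, then every prime $q\mid\Phi_n^{*}(\alpha,\beta)$ either divides $F$ or divides $n$, and a standard $q$-adic valuation argument (going back to Carmichael and refined by Stewart) controls the exponent with which such $q$ can occur.

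This dichotomy gives an upper bound essentially of shape $|\Phi_n^{*}(\alpha,\beta)|\le C(F)\cdot n$. Against this one sets the Archimedean evaluation
\[
\Phi_n^{*}(\alpha,\beta) \;=\; \prod_{\substack{1\le k\le n\\ \gcd(k,n)=1}}\bigl(\alpha-\zeta_n^{k}\beta\bigr),
\]
which after factoring out $|\alpha|^{\varphi(n)}$ reduces to estimating $\prod_{k}|1-(\beta/\alpha)\zeta_n^{k}|$. When $|\alpha/\beta|$ is bounded away from $1$, the lower bound on $|\Phi_n^{*}(\alpha,\beta)|$ grows like $|\alpha|^{\varphi(n)}$ and immediately collides with the upper bound above, forcing $n$ to be small; this is the classical Schinzel--Stewart--Voutier regime.

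The real obstacle is the ``Archimedean-small'' case, in which $\alpha/\beta$ lies so close to the unit circle that the Archimedean product becomes tiny. Here I would invoke a sharp lower bound for the linear form in two logarithms
\[
\Lambda \;=\; n\log(\alpha/\beta) - 2\pi\ii k
\]
of Laurent--Mignotte--Nesterenko type, applied to the algebraic number $\alpha/\beta$ (of modulus $1$) whose logarithmic height is controlled by $E$ and $F$. The assumption that $L_n(\alpha,\beta)$ has no primitive divisor forces $|\Lambda|$ to be extremely small, and the transcendence inequality then turns this into an explicit bound $n\le N_{0}$. Careful optimisation of the parameters in the two-logarithm estimate, together with the cyclotomic constraints, brings $N_{0}$ down to a few hundred.

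Finally, for the finitely many Lehmer pairs $(\alpha,\beta)$ and indices $30<n\le N_{0}$ that remain, the inequalities on $E=(\alpha+\beta)^{2}$ and $F$ cut out a bounded, explicit region of admissible parameters; one enumerates the finitely many candidate pairs and verifies directly that $\Phi_n^{*}(\alpha,\beta)$ carries a primitive prime divisor in every remaining case, sharpening the threshold from $N_{0}$ down to $30$. The main obstacle is unmistakably the two-logarithm step: reducing $N_{0}$ to a size that makes the final enumeration computationally feasible is precisely where the strongest available quantitative transcendence estimates are indispensable, and it is this step that ultimately dictates the numerical constant $30$ in the statement.
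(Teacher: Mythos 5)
The paper does not prove this lemma at all: it is quoted verbatim from the cited work of Bilu, Hanrot and Voutier, and your outline faithfully reproduces the strategy of that reference (cyclotomic factorization of $\alpha^n-\beta^n$, the dichotomy for primes dividing $\Phi_n^{*}(\alpha,\beta)$, Laurent--Mignotte--Nesterenko two-logarithm bounds in the near-unit-circle case, and a terminal enumeration of defective Lehmer pairs). Bear in mind that what you have written is a roadmap rather than a proof --- the explicit constants in the two-logarithm step and the classification of the exceptional pairs occupy the bulk of the original paper --- but as a description of the approach it is accurate and matches the source the lemma is drawn from.
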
 

\section{Proof of Theorem}
  
In this section, we assume that $\ell,m,r$ are positive integers satisfying \eqref{1.3}. Let $d=\gcd (r\cdot \ell m^2-1,(\ell-r)\cdot \ell m^2+1)$. Since
\begin{equation}\label{3.1}
(r\ell m^2-1)+((\ell-r)\ell m^2+1)=(\ell m)^2,
\end{equation}
we have $d\mid \ell^2m^2$. Further, since $\gcd (\ell m, r\ell m^2-1)=1$, we get $d=1$ and
\begin{equation}\label{3.2}
\gcd (r\ell m^2-1, (\ell-r)\ell m^2+1)=1.
\end{equation}

\begin{lemma}\label{Lemma 3.1}
All solutions $(x,y,z)$ of the equation
\begin{equation}\label{3.3}
(r\ell m^2-1)^x+((\ell-r)\ell m^2+1)^y=(\ell m)^z, \ x,y,z \in \mathbb{N}
\end{equation}
satisfy $2\nmid xy$.
\end{lemma}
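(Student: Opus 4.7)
My plan is to prove separately that $x$ is odd and that $y$ is odd, each by a one-line congruence on \eqref{3.3}. Both steps exploit the structural identity $A+B=C^2$ together with the hypotheses \eqref{1.3}.

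For $x$, I would reduce \eqref{3.3} modulo $C=\ell m$. From \eqref{1.4} one reads off $A\equiv -1\pmod{\ell m}$ and $B\equiv 1\pmod{\ell m}$, while $C^z\equiv 0\pmod{\ell m}$, so the equation becomes $(-1)^x+1\equiv 0\pmod{\ell m}$. Under \eqref{1.3}, $\ell$ is odd and $\ell>r\ge 3$, hence $\ell\ge 5$ and therefore $\ell m\ge 5>2$, which forbids $(-1)^x=1$; thus $x$ must be odd.

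For $y$, I would reduce \eqref{3.3} modulo $3$. Since $3\mid r$, we have $A=r\ell m^2-1\equiv -1\pmod 3$; and since $3\nmid\ell m$, we have $\ell^2 m^2\equiv 1\pmod 3$, so $B=(\ell-r)\ell m^2+1\equiv \ell^2 m^2+1\equiv 2\equiv -1\pmod 3$, while $C^z=(\ell m)^z\equiv\pm 1\pmod 3$. Therefore $(-1)^x+(-1)^y\equiv\pm 1\pmod 3$; if $x\not\equiv y\pmod 2$ the left side would be $0\pmod 3$, a contradiction. Hence $x\equiv y\pmod 2$, and combined with the previous paragraph this forces $y$ to be odd as well, so $2\nmid xy$.

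I do not anticipate a genuine obstacle at this stage: the lemma is essentially bookkeeping, and the divisibility conditions in \eqref{1.3} are arranged precisely so that these two reductions are decisive. The heavier tools of Section~\ref{Section 2}---Lemma~\ref{Lemma 2.1} on the representation of solutions to \eqref{2.1} and the BHV theorem (Lemma~\ref{Lemma 2.2})---are not needed for the parity claim; they will enter later, once the parity information from Lemma~\ref{Lemma 3.1} is in hand to recast \eqref{3.3} as a Lehmer-sequence identity.
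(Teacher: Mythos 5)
Your proof is correct and follows essentially the same route as the paper: the paper also gets $2\nmid x$ from the congruence $(-1)^x+1\equiv 0$ (modulo $\ell$ rather than modulo $\ell m$, an immaterial difference), and then gets $2\nmid y$ by reducing modulo $3$ using $3\mid r$, $3\nmid \ell m$, exactly as you do.
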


\begin{proof}
Since $\ell >r\geq 3$ by \eqref{1.3}, we see from \eqref{3.3} that $0\equiv (\ell m)^z \equiv (r\ell m^2-1)^x+((\ell-r)\ell m^2+1)^y \equiv (-1)^x+1 \pmod \ell$ and 
\begin{equation}\label{3.4}
2\nmid x.
\end{equation}

Since $3\mid r$ and $3\nmid \ell m$, by \eqref{3.3} and \eqref{3.4}, we have
$$
0 \not\equiv (\ell m)^z \equiv (r\ell m^2-1)^x+((\ell-r)\ell m^2+1)^y \equiv (-1)^x+(\ell^2m^2+1)^y
\equiv -1+2^y \\
$$
\begin{equation}\label{3.5}
\equiv -1+(-1)^y \equiv \begin{cases}
\ 0 \pmod 3 & \text{if $2\mid y$,}\\
\\
\ 1 \pmod 3 & \text{if $2\nmid y$.}\\
\end{cases}
\end{equation}

Hence, by \eqref{3.5}, we get $2\nmid y$. Thus, the lemma is proved.
\end{proof}

\begin{lemma}\label{Lemma 3.2}
If $2 \mid m$, then \eqref{3.3} has no solutions $(x,y,z)\neq(1,1,2)$.
\end{lemma}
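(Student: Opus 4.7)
The plan is to pin down the exponent $z$ by a $2$-adic valuation argument and then recover $(x,y) = (1,1)$ from the identity $A + B = C^2$. By Lemma~\ref{Lemma 3.1} I may already assume that $x$ and $y$ are both odd, which is exactly the parity I need to make the binomial expansions below have clean leading terms.

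First I would write $m = 2^{s} m_{1}$ with $s \geq 1$ and $m_{1}$ odd, so that $v_{2}(C^{z}) = v_{2}((\ell m)^{z}) = sz$. The whole game is then to prove
\[
v_{2}(A^{x} + B^{y}) = 2s,
\]
because matching the two sides immediately forces $z = 2$.

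To establish this equality I would expand
\[
A^{x} = -(1 - r\ell m^{2})^{x}, \qquad B^{y} = (1 + (\ell - r)\ell m^{2})^{y}
\]
by the binomial theorem (using $x$ odd for the overall sign of $A^{x}$). The two constant terms cancel, every quadratic-or-higher binomial term carries a factor of $m^{4}$ and hence has $v_{2} \geq 4s$, and what is left is
\[
A^{x} + B^{y} = \ell m^{2}\bigl(xr + y(\ell - r)\bigr) + R, \qquad v_{2}(R) \geq 4s.
\]
A short parity check shows $xr + y(\ell - r)$ is always odd: if $r$ is odd then $\ell - r$ is even and $xr$ is odd, while if $r$ is even then $\ell - r$ is odd and $y(\ell - r)$ is odd. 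In either case the sum is odd, so $v_{2}\bigl(\ell m^{2}(xr + y(\ell - r))\bigr) = 2s < v_{2}(R)$, which is the displayed claim.

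With $z = 2$ in hand, the relation $A + B = C^{2}$ collapses \eqref{3.3} to $A^{x} + B^{y} = A + B$, i.e.\
\[
A(A^{x-1} - 1) + B(B^{y-1} - 1) = 0.
\]
Both summands are non-negative, so each must vanish and $x = y = 1$, contradicting the assumption $(x, y, z) \neq (1, 1, 2)$. The main obstacle I foresee is the $2$-adic bookkeeping: one has to check simultaneously that the higher order binomial terms are genuinely negligible and that $xr + y(\ell - r)$ is a $2$-adic unit. Both are straightforward but depend on the small parity case split on $r$ described above, which is the only delicate point in the argument.
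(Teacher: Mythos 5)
Your proof is correct, and it rests on exactly the same computation as the paper's: expand $A^x$ and $B^y$ binomially, observe that the constant terms cancel and the leading term is $\ell m^2\bigl(rx+(\ell-r)y\bigr)$ with $rx+(\ell-r)y$ odd (using $2\nmid xy$ from Lemma~\ref{Lemma 3.1} and $2\nmid\ell$), while all higher terms are divisible by $m^4$. The only difference is organizational: you compute $v_2(A^x+B^y)=2v_2(m)$ exactly to force $z=2$ and then finish with $A^x+B^y=A+B\Rightarrow x=y=1$, whereas the paper first notes $(x,y)\neq(1,1)$ forces $z\geq 3$ and then reads the same leading-term congruence modulo $m^3$ and modulo $2$ to reach a contradiction.
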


\begin{proof}
Let $(x,y,z)$ be a solution of \eqref{3.3} with $(x,y,z)\neq(1,1,2)$. Since $(x,y)\neq(1,1)$, we have $\max \{x,y\}\geq2$, and by \eqref{3.1}, we get $z\geq3$. Hence, by Lemma \ref{Lemma 3.1}, we see from \eqref{3.3} that
\begin{equation}\label{3.6}
0 \equiv (\ell m)^z \equiv (r\ell m^2-1)^x+((\ell-r)\ell m^2+1)^y \equiv (r\ell m^2x-1)+((\ell-r)\ell m^2y+1) \equiv \ell m^2(rx+(\ell-r)y) \pmod {m^3},
\end{equation}
whence we get
\begin{equation}\label{3.7}
\ell (rx+(\ell-r)y) \equiv 0 \pmod m.
\end{equation}
If $2\mid m$, since $2\nmid xy$, then from \eqref{3.7} we obtain $\ell^2 y \equiv \ell^2 \equiv 0 \pmod 2$. But, since $2\nmid \ell$ by \eqref{1.3}, this is impossible. Thus, the lemma is proved.
\end{proof}

\begin{proof}[Proof of the Theorem]
We now assume that $(x,y,z)$ is a solution of \eqref{3.3} with $(x,y,z) \neq (1,1,2)$. By Lemma \ref{Lemma 3.2}, we have $2\nmid m$. Let
\begin{equation}\label{3.8}
P=r\ell m^2-1, \ Q=(\ell-r)\ell m^2+1, \ K=\ell m. 
\end{equation}
By \eqref{3.1} and \eqref{3.8}, we have $\min \{P,Q\}>1$ and $\gcd (P,Q)=\gcd (K,2PQ)=1$. Therefore, the results of \eqref{2.1} given in Section \ref{Section 2} can be applied to the equation
\begin{equation}\label{3.9}
PX^2+QY^2=K^Z, \ X,Y,Z \in \mathbb{Z}, \ \gcd (X,Y)=1, \ Z>0.
\end{equation}

We see from \eqref{3.1} and \eqref{3.8} that \eqref{3.9} has a solution $(X_0, Y_0, Z_0)=(1,1,2)$. Let $L_0=\ideal{1,1,2}$. Since
\begin{equation}\label{3.10}
P \equiv -1 \pmod K, \ Q \equiv 1 \pmod K,
\end{equation}
by \eqref{2.2} and \eqref{3.10}, we have $L_0 \equiv -P \equiv 1 \pmod K$ and
\begin{equation}\label{3.11}
L_0=1.
\end{equation}
Further, since $P+Q=K^2$ by \eqref{3.1}, all solutions $(X,Y,Z)$ of \eqref{3.9} satisfy $Z\geq2$. This implies that $(1,1,2)$ is the least solution of $S(1)$, where $S(1)$ is the set of all solutions $(X,Y,Z)$ of \eqref{3.9} with
\begin{equation}\label{3.12}
\ideal{X,Y,Z} \equiv \pm 1 \pmod K.
\end{equation}

Since $2\nmid xy$, we see from \eqref{3.3} and \eqref{3.8} that \eqref{3.9} has a solution
\begin{equation}\label{3.13}
(X,Y,Z)=(P^{(x-1)/2},Q^{(y-1)/2},z).
\end{equation}
Let $L=\ideal{X,Y,Z}$. By \eqref{2.2}, \eqref{3.3}, \eqref{3.8} and \eqref{3.10}, we have
\begin{equation}\label{3.14}
L \equiv -\dfrac{P^{(x+1)/2}}{Q^{(y-1)/2}} \equiv (-1)^{(x-1)/2} \equiv \pm 1 \pmod K.
\end{equation}
Hence, by \eqref{3.12} and \eqref{3.14}, the solution \eqref{3.13} belongs to $S(1)$. Recall that $(1,1,2)$ is the least solution of $S(1)$. Applying Lemma \ref{Lemma 2.1} to \eqref{3.13}, we have
\begin{equation}\label{3.15}
z=2t, \ t \in \mathbb{N}, \ 2\nmid t, \ t>1,
\end{equation}
\begin{equation}\label{3.16}
P^{(x-1)/2}\sqrt{P}+Q^{(y-1)/2}\sqrt{-Q} = \lambda_1(\sqrt{P}+\lambda_2\sqrt{-Q})^t, \ \lambda_1,\lambda_2 \in \{1,-1\}.
\end{equation}

By \eqref{3.16}, we get
\begin{equation}\label{3.17}
P^{(x-1)/2}= \lambda_1 \sum_{i=0}^{(t-1)/2} \binom {t}{2i} P^{(t-1)/2-i}(-Q)^i, \ \ 
Q^{(y-1)/2}= \lambda_1 \lambda_2 \sum_{i=0}^{(t-1)/2} \binom {t}{2i+1} P^{(t-1)/2-i}(-Q)^i.
\end{equation}
Since $(x,y,z)\neq(1,1,2)$, we have $\max \{x,y\}>1$. Hence, by \eqref{3.1} and \eqref{3.17}, we get either $P\mid t$ or $Q\mid t$. This implies that
\begin{equation}\label{3.18}
t\geq \min \{P,Q\}.
\end{equation}
Further, since $\min \{P,Q\}>30$, we see from \eqref{3.18} that
\begin{equation}\label{3.19}
t>30.
\end{equation}

Let
\begin{equation}\label{3.20}
\alpha=\sqrt{P}+\sqrt{-Q}, \ \beta=\sqrt{P}-\sqrt{-Q}.
\end{equation}
Then $(\alpha+\beta)^2=4P,\ \alpha\beta=K^2$ are nonzero coprime integers, and $\alpha/\beta=((P-Q)+2\sqrt{-PQ})/K^2$ is not a root of unity. Hence, $(\alpha,\beta)$ is a Lehmer pair. By \eqref{2.5}, \eqref{3.16}, \eqref{3.17} and \eqref{3.20}, we have
\begin{equation}\label{3.21}
Q^{(y-1)/2}= \bigg|\dfrac{\alpha^t-\beta^t}{\alpha-\beta}\bigg|=|L_t(\alpha,\beta)|.
\end{equation}
We see from \eqref{3.21} that the Lehmer number $L_t(\alpha,\beta)$ has no primitive divisors. But, by Lemma \ref{Lemma 2.2}, we find from \eqref{3.19} that this is false. Thus, under the assumption, \eqref{1.1} has only the solution $(x,y,z)=(1,1,2)$. The theorem is proved.

\end{proof}

\bigskip
%################################%
\subsection*{Acknowledgements}
The first and third authors were supported by T\"{U}B\.{I}TAK (the Scientific and Technological Research Council of Turkey) under Project No: 117F287.

\end{document}